\def\@settitle{%
  \vspace*{-20pt}
  \begin{flushleft}%
    \baselineskip14\p@\relax
    \normalfont\bfseries\LARGE
    \@title
  \end{flushleft}%
}
\def\@setaddresses{\par
  \nobreak \begingroup\raggedright
  \small
  \def\author##1{\nobreak\addvspace\smallskipamount}%
  \def\\{\unskip, \ignorespaces}%
  \interlinepenalty\@M
  \def\address##1##2{\begingroup
    \par\addvspace\bigskipamount\noindent
    \@ifnotempty{##1}{(\ignorespaces##1\unskip) }%
    {\ignorespaces##2}\par\endgroup}%
  \def\curraddr##1##2{\begingroup
    \@ifnotempty{##2}{\nobreak\noindent\curraddrname
      \@ifnotempty{##1}{, \ignorespaces##1\unskip}\/:\space
      ##2\par}\endgroup}%
  \def\email##1##2{\begingroup
    \@ifnotempty{##2}{\smallskip\nobreak\noindent E-mail address%
      \@ifnotempty{##1}{, \ignorespaces##1\unskip}\/:\space
      \ttfamily##2\par}\endgroup}%
  \def\urladdr##1##2{\begingroup
    \def~{\char`\~}%
    \@ifnotempty{##2}{\nobreak\noindent\urladdrname
      \@ifnotempty{##1}{, \ignorespaces##1\unskip}\/:\space
      \ttfamily##2\par}\endgroup}%
  \addresses
  \endgroup
  \global\let\addresses=\@empty
}
\def\@setabstracta{%
    \ifvoid\abstractbox
  \else
    \skip@25\p@ \advance\skip@-\lastskip
    \advance\skip@-\baselineskip \vskip\skip@
    \box\abstractbox
    \prevdepth\z@ 
    \vskip-10pt
  \fi
}
\renewenvironment{abstract}{%
  \ifx\maketitle\relax
    \ClassWarning{\@classname}{Abstract should precede
      \protect\maketitle\space in AMS document classes; reported}%
  \fi
  \global\setbox\abstractbox=\vtop \bgroup
    \normalfont\small
    \list{}{\labelwidth\z@
      \leftmargin0pc \rightmargin\leftmargin
      \listparindent\normalparindent \itemindent\z@
      \parsep\z@ \@plus\p@
      
    }%
    \item[\hskip\labelsep\bfseries\abstractname.]%
}{%
  \endlist\egroup
  \ifx\@setabstract\relax \@setabstracta \fi
}
\def\section{\@startsection{section}{1}%
  \z@{-1.2\linespacing\@plus-.5\linespacing}{.8\linespacing}%
  {\normalfont\bfseries\large}}
\def\subsection{\@startsection{subsection}{2}%
  \z@{-.8\linespacing\@plus-.3\linespacing}{.3\linespacing\@plus.2\linespacing}%
  {\normalfont\bfseries}}
\def\subsubsection{\@startsection{subsubsection}{3}%
  \z@{.7\linespacing\@plus.1\linespacing}{-1.5ex}%
  {\normalfont\itshape}}
\def\@secnumfont{\bfseries}
\def\Z{\mathbb{Z}}
\def\+{\oplus}
\newcommand{\genlegendre}[4]{%
  \genfrac{(}{)}{}{#1}{#3}{#4}%
  \if\relax\detokenize{#2}\relax\else_{\!#2}\fi
}
\theoremstyle{plain}
\newtheorem{theorem}{Theorem}[section]
\newtheorem{proposition}[theorem]{Proposition}
\newtheorem{lemma}[theorem]{Lemma}
\newtheorem{corollary}[theorem]{Corollary}
\theoremstyle{definition}
\newtheorem{remark}[theorem]{Remark}
\newtheorem*{sliceribbon}{The slice-ribbon conjecture}
\newtheorem{conjecture}{Conjecture}
\newcommand{\shortxra}[2][]{\ext@arrow 0359\rightarrowfill@{#1}{#2}}
\def\longrightarrowfill@{\arrowfill@\relbar\relbar\longrightarrow}
\newcommand{\longxra}[2][]{\ext@arrow 0359\longrightarrowfill@{#1}{#2}}
\begin{document}

\title[]{Non-slice 3-stranded pretzel knots}

\author{Min Hoon Kim}
\address{Chonnam National University, 77, Yongbong-ro, Buk-gu, Gwangju, Republic of Korea}
\email{minhoonkim@jnu.ac.kr}

\author{Changhee Lee}
\address{Department of Mathematics, POSTECH, Pohang 37673, Republic of Korea}
\email{leechanghee@postech.ac.kr}

\author{Minkyoung Song}
\address{Center for Geometry and Physics, Institute for Basic Science (IBS), Pohang
37673, Republic of Korea}
\email{mksong@ibs.re.kr}

\thanks{The
first named author was partly supported by NRF grant 2019R1A3B2067839. The third named author was supported by the Institute for Basic Science (IBS-R003-D1).}

\def\subjclassname{\textup{2020} Mathematics Subject Classification}
\expandafter\let\csname subjclassname@1991\endcsname=\subjclassname
\expandafter\let\csname subjclassname@2000\endcsname=\subjclassname
\subjclass{%
  57K10, 
  57K31, 
  57K40, 
  57N70
}

\begin{abstract}
Greene-Jabuka and Lecuona confirmed the slice-ribbon conjecture for 3-stranded pretzel knots except for an infinite family $P(a,-a-2,-\frac{(a+1)^2}{2})$ where $a$ is an odd integer greater than $1$.  Lecuona and Miller showed that $P(a,-a-2,-\frac{(a+1)^2}{2})$ are not slice unless $a\equiv 1, 11, 37, 47, 59 \pmod{60}$. In this note, we show that four-fifths of the remaining knots in the family are not slice. 
\end{abstract}

\maketitle

\section{Introduction}
A knot $K$ in $S^3$ is \emph{slice} if $K$ bounds a smoothly embedded disk in the 4-ball $D^4$. A knot $K$ in $S^3$ is \emph{ribbon} if $K$ bounds an immersed disk in $S^3$ having only ribbon singularities. It is easy to see that ribbon knots are slice since we can push such an immersed disk in $S^3$ around the ribbon singularities  to get an embedded disk in $D^4$. A famous open question posed by Fox \cite{Fox:1961-1} asks whether all slice knots are ribbon or not. This question is usually called the \emph{slice-ribbon conjecture}.

\begin{sliceribbon}[{\cite{Fox:1961-1}}] Every slice knot is ribbon.
\end{sliceribbon}

In \cite{Lisca:2007-1}, Lisca confirmed the slice-ribbon conjecture for 2-bridge knots. The slice-ribbon conjecture has been extensively studied for pretzel knots.  In \cite{Greene-Jabuka:2011-1}, Greene and Jabuka confirmed the slice-ribbon conjecture for 3-stranded pretzel knots $P(p,q,r)$ with odd $p, q, r$. (Figure~\ref{figure:Pretzel} depicts a diagram of a pretzel knot $P(p,q,r)$ where $p,q,r$ in the boxes represent the numbers of half twists.) 

\begin{figure}[htb!]
\includegraphics[scale =0.9] {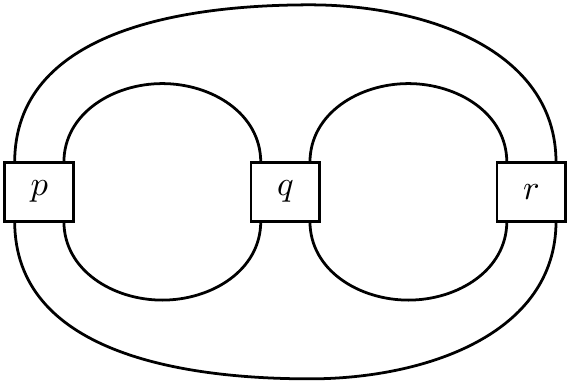}
\caption{3-stranded pretzel knot $P(p,q,r)$.}
\label{figure:Pretzel}
\end{figure}

Lecuona \cite{Lecuona:2015-1} confirmed the slice-ribbon  conjecture for all 3-stranded pretzel knots $P(p,q,r)$ except for infinitely many 3-stranded pretzel knots of the form $P(a,-a-2,-\frac{(a+1)^2}{2})$ for $a\geq 3$ odd. There are further interesting results on the slice-ribbon conjecture for general pretzel knots and links (for example, see \cite{Long:2014-1,Bryant:2017-1,Aceto-Kim-Park-Ray:2018-1,Kishimoto-Shibuya-Tsukamoto:2020-1}). 

Regarding the remaining 3-stranded pretzel knots $P(a,-a-2,-\frac{(a+1)^2}{2})$ with odd $a\geq 3$, Lecuona conjectured that none of them are slice even topologically. (Recall that a knot $K$ in $S^3$ is \emph{topologically slice} if $K$ bounds a locally flat, embedded disk in $D^4$.) 
 
 \begin{conjecture}[{\cite[Remark 2.3]{Lecuona:2015-1}}]\label{conjecture:Lecuona}
 For every odd integer $a\geq 3$, the $3$-stranded pretzel knot $P(a,-a-2,-\frac{(a+1)^2}{2})$ is not topologically slice.
 \end{conjecture}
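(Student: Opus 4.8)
The first thing I would do is compute the determinant. Writing $K_a=P(a,-a-2,-\frac{(a+1)^2}{2})$ and using the formula $\det P(p,q,r)=|pq+qr+rp|$, one finds
\[
pq+qr+rp=-a^2-2a+\tfrac{(a+1)^2}{2}\bigl((a+2)-a\bigr)=-a^2-2a+(a+1)^2=1,
\]
so $\det K_a=1$ and the double branched cover $\Sigma_2(K_a)$ is an integral homology sphere. This immediately tells me that the classical obstructions are useless here: the signature and the Arf invariant vanish, ordinary Casson--Gordon invariants over $\Sigma_2$ are unavailable because $H_1(\Sigma_2(K_a))=0$, and I expect these knots to be algebraically slice, so the entire difficulty is concentrated in the metabelian layer. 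Checking the Fox--Milnor condition on $\Delta_{K_a}$ is a reasonable warm-up, but I do not expect it to settle the question by itself.

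The plan, following the Casson--Gordon philosophy, is therefore to pass to an odd prime cyclic branched cover $\Sigma_q(K_a)$, whose first homology has order $\bigl|\prod_{j=1}^{q-1}\Delta_{K_a}(\zeta_q^{\,j})\bigr|$ and is nontrivial even though $\Sigma_2$ is a homology sphere. Since $K_a$ is a Montesinos knot, $\Sigma_q(K_a)$ is a Seifert fibered space bounding a canonical plumbing, so $H_1(\Sigma_q(K_a))$ together with its linking form is explicitly computable from the triple $(p,q,r)$. If $K_a$ were topologically slice, then $H_1(\Sigma_q(K_a))$ would carry a metabolizer $L$ for the linking form, and for every prime-power-order character $\chi$ vanishing on $L$ the Casson--Gordon signature invariant $\sigma(K_a,\chi)$ would have to vanish; the goal is to show this is impossible.

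To evaluate $\sigma(K_a,\chi)$ I would use the plumbing description to express it as a signature defect, that is, as a sum of Dedekind-type sums determined by the Seifert invariants and by $\chi$. For this family I expect those sums to collapse, after Gauss-sum manipulations, into expressions built from Legendre symbols $\legendre{\cdot}{\cdot}$ whose values depend only on $a$ modulo a small modulus. Quadratic reciprocity then converts the non-vanishing of $\sigma(K_a,\chi)$ into explicit congruences on $a$; this is exactly the mechanism that produced the residues $1,11,37,47,59\pmod{60}$ left open by Lecuona and Miller, and sharpening the choice of the prime $q$ (or of the auxiliary character) should refine these Legendre-symbol conditions enough to eliminate four of the five remaining classes.

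The main obstacle I anticipate is twofold. First, one must rule out every metabolizer simultaneously: exhibiting a single bad character is not enough, since a slice knot only forces vanishing on characters killing \emph{some} metabolizer, so the argument must control the whole family of admissible characters against the constraints of the linking form. Second, and more seriously, I expect an unavoidable degenerate residue class in which the governing Legendre symbol vanishes or the competing contributions to $\sigma(K_a,\chi)$ cancel, so that the invariant carries no information there. This is precisely why I would expect the method to settle only four-fifths of the remaining knots rather than the full conjecture, and why closing the last class would require a genuinely new input beyond the Casson--Gordon computation.
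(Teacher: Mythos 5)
Your determinant computation is correct, but the conclusion you draw from it is where the proposal goes wrong. From $\det K_a=1$ you infer that ``classical obstructions are useless,'' that the knots should be algebraically slice, and that Fox--Milnor is only a warm-up --- and you then abandon exactly the route the paper takes. Determinant $1$ only constrains $\Delta_{K_a}(-1)$; it says nothing about whether $\Delta_{K_a}$ admits a Fox--Milnor factorization. In fact the paper's entire proof of the covered cases is the Fox--Milnor condition: for any prime $p$ dividing $\frac{a+1}{2}$, the mod-$p$ reduction of $\Delta_{P_a}$ collapses (following Lecuona) to $\prod_{d\mid a,\, d\neq 1}\overline{\Phi}_d^p(-t)\prod_{d\mid a+2,\, d\neq 1}\overline{\Phi}_d^p(-t)$ with all irreducible factors distinct, so sliceness forces, for every such $d$, that the number $N_d^p$ of irreducible factors of $\overline{\Phi}_d^p$ be even and that none of them be self-reciprocal. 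The paper then proves $N_d^p$ is even if and only if $\bigl(\tfrac{p}{d}\bigr)=1$, and that a self-reciprocal irreducible factor exists if and only if $p^w\equiv -1 \pmod{d}$ for some $w$; quadratic reciprocity and elementary congruence arguments kill the classes $a\equiv 3,5\pmod{8}$, $a\equiv 5\pmod{12}$, and $a\equiv 3\pmod{4}$. So in precisely the residue classes the theorem covers, these knots are not even algebraically slice, and your guiding premise (``the entire difficulty is concentrated in the metabelian layer'') is false there. Your instinct that Legendre symbols and reciprocity govern the answer is right, but they enter through counting irreducible factors of cyclotomic polynomials mod $p$, not through signature defects.

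The Casson--Gordon plan itself also contains a concrete false step: for $q>2$ the $q$-fold cyclic branched cover of a Montesinos knot is in general \emph{not} Seifert fibered --- only the double branched cover carries the Seifert structure coming from the orbifold fibration --- so the proposed plumbing description and Dedekind-sum evaluation of $\sigma(K_a,\chi)$ over $\Sigma_q(K_a)$ has no foundation. Relatedly, the residues $1,11,37,47,59\pmod{60}$ left open by Lecuona and Miller were produced by the same cyclotomic/Fox--Milnor analysis, not by a Casson--Gordon computation, so the expectation that sharpening the choice of $q$ would ``refine'' that mechanism is misattributed. Finally, even granting the setup, all decisive steps are deferred to expectations (``I expect those sums to collapse,'' ``should refine\ldots enough to eliminate four of the five classes''), so nothing is actually established: the proposal is a research program, whereas the paper settles the classes $a\equiv 11,47,59\pmod{60}$ and $a\equiv 37,61\pmod{120}$ by a short, purely number-theoretic argument on the Alexander polynomial --- the very tool you set aside at the outset.
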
 
 
 Conjecture~\ref{conjecture:Lecuona} is of interest since it implies that the slice-ribbon conjecture is true for all 3-stranded pretzel knots.  Lecuona showed that Conjecture~\ref{conjecture:Lecuona} is true unless $a\equiv 1,11, 37, 47, 49, 59 \pmod{60}$ and $a\geq 1599$. Later Miller showed in  \cite[Theorem 5.1]{Miller:2017-1} that  Conjecture~\ref{conjecture:Lecuona} is true when $a\equiv 49\pmod{60}$. 
 
 More interestingly,  confirming Conjecture~\ref{conjecture:Lecuona} will lead us to classify all 3-stranded pretzel knots which are topologically slice. In fact, for a 3-stranded pretzel knot $K$ which is not of the form $P(a,-a-2,-\frac{(a+1)^2}{2})$ with $a\geq 3$ odd, Miller showed in \cite[Theorems 1.2 and 1.6]{Miller:2017-1} that  $K$ is topologically slice if and only if either $\Delta_K(t)=1$ or $K$ is ribbon. 

 Conjecture~\ref{conjecture:Lecuona} has remained open when $a\equiv 1,11,37,47, 59\pmod{60}$. Our main result is to show that four-fifths of the remaining knots of Conjecture~\ref{conjecture:Lecuona} are not slice. 
\begin{theorem}\label{thm:main-introduction} Let $a\geq 3$ be an odd integer. 
If $a\equiv 11, 47, 59\pmod{60}$ or $a\equiv 37, 61 \pmod{120}$, then the $3$-stranded pretzel knot $P(a,-a-2,-\frac{(a+1)^2}{2})$ is not topologically slice. 
\end{theorem}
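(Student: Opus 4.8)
Since $\det(K_a)=\lvert a(-a-2)+(-a-2)(-\tfrac{(a+1)^2}{2})+(-\tfrac{(a+1)^2}{2})a\rvert=1$, the double branched cover $\Sigma_2(K_a)$ is an integral homology sphere; in particular the determinant and all order-two Casson--Gordon characters carry no information, and, being candidates for topological sliceness, the $K_a$ are algebraically slice, so the Levine--Tristram signatures and the (metabolic) Seifert form give no obstruction either. Note also that $\Delta_{K_a}(t)$ must be nontrivial, since otherwise Freedman's theorem would force $K_a$ to be topologically slice; in particular the $K_a$ have genus at least $2$. The plan is therefore to obstruct topological sliceness with metabelian Casson--Gordon invariants, in the spirit of Lecuona and Miller. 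The obstruction I would use is: if $K_a$ is topologically slice, then for the smallest cyclic branched cover $\Sigma_n(K_a)$ with $H_1\neq 0$ there is a metabolizer $P\le H_1(\Sigma_n(K_a))$ for the $\mathbb{Q}/\mathbb{Z}$-valued linking form, invariant under the covering involution, such that the Casson--Gordon invariant $\tau(K_a,\chi)$ vanishes for every prime-power-order character $\chi$ vanishing on $P$.

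First I would make the cover explicit. Using a pretzel Seifert surface (equivalently, the Seifert-fibered plumbing description of $\Sigma_n$), I would compute $H_1(\Sigma_n(K_a))$ together with its linking form as explicit functions of $a$, in which the three twist parameters $a$, $-a-2$, $-\tfrac{(a+1)^2}{2}$ enter directly. From this I would enumerate the finitely many deck-invariant metabolizers $P$ and, for each, the prime-power characters $\chi$ vanishing on $P$. This step is bookkeeping, but it is essential that the enumeration be uniform in $a$, so that the later congruence analysis applies to an entire residue class at once.

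Next I would compute $\tau(K_a,\chi)$ by a Gilmer--Litherland-type signature formula (or equivalently via the twisted Alexander polynomial and its Fox--Milnor obstruction, after Herald--Kirk--Livingston), expressing the signature part of $\tau(K_a,\chi)$ as a signed sum of Tristram--Levine signature jumps of the torus knots $T(2,\cdot)$ coming from the three twist regions, evaluated at the root of unity prescribed by $\chi$. Vanishing of $\tau$ then becomes a lattice-point/jump-counting identity whose value is periodic in $a$. The odd part of the period turns out to be $15$, giving conditions modulo $3$ and $5$; the $2$-adic input is the valuation of $-\tfrac{(a+1)^2}{2}$, which is $\ge 3$ exactly when $a\equiv 3\pmod 4$ (the classes $11,47,59\pmod{60}$) but equals $1$ when $a\equiv 1\pmod 4$ (the classes $1,37\pmod{60}$), where a finer $a\bmod 8$ distinction survives and pushes the conclusion to mod $120$.

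The hard part will be the uniform non-vanishing: to obstruct sliceness I must exhibit, for every admissible metabolizer $P$ simultaneously, a character $\chi$ with $\tau(K_a,\chi)\neq 0$, and do so for all $a$ in the target class at once. I expect this to require an exact evaluation of the signature sum rather than a crude estimate, since the quantity is a difference of comparable counts. This is also precisely where the restriction to four-fifths should emerge: in the leftover case $a\equiv 1\pmod 8$ within the classes $1,37\pmod{60}$ (that is, $a\equiv 1,97\pmod{120}$) the $2$-adic contribution degenerates and the signature sum vanishes for some metabolizer and character, leaving the method inconclusive there, whereas the classes $11,47,59\pmod{60}$ and $37,61\pmod{120}$ are exactly those for which the count can be shown to be nonzero.
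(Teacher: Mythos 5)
Your plan rests on a premise that is false, and the paper's actual proof shows exactly why. You compute (correctly) that $\det = 1$, but you then infer that ``the $K_a$ are algebraically slice, so the \dots\ Seifert form give[s] no obstruction,'' and that one must therefore escalate to metabelian Casson--Gordon invariants. In fact the obstruction the paper uses is precisely the abelian one you discard: the Fox--Milnor condition on the Alexander polynomial. For any prime $p$ dividing $\frac{a+1}{2}$, the mod $p$ reduction $\overline{\Delta}_{P_a}^p(t)$ is (up to units) a product $\prod_{d\mid a,\, d\neq 1}\overline{\Phi}_d^p(-t)\prod_{d\mid a+2,\, d\neq 1}\overline{\Phi}_d^p(-t)$ of distinct irreducible factors of multiplicity one, so Fox--Milnor forces (i) the number $N_d^p$ of irreducible factors of each $\overline{\Phi}_d^p$ to be even, and (ii) $\overline{\Phi}_d^p(-t)$ to have no self-reciprocal irreducible factor. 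The paper kills (i) by showing $N_d^p$ is even iff the Legendre symbol $\bigl(\frac{p}{d}\bigr)=1$ and then computing, via quadratic reciprocity, that $\bigl(\frac{p}{d}\bigr)=-1$ for suitable $p,d$ when $a\equiv 3,5\pmod 8$ or $a\equiv 5\pmod{12}$; it kills (ii) by showing $\overline{\Phi}_d^p$ has a self-reciprocal irreducible factor iff $p^w\equiv -1\pmod d$ for some $w$, and verifying this when $a\equiv 3\pmod 4$. So for every congruence class in the statement the knots are not even algebraically slice, and your assertion that Levine--Tristram signatures and the Seifert form ``give no obstruction'' is contradicted by the very theorem you are trying to prove.

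Even setting that aside, your proposal is a research program, not a proof: the decisive steps --- a uniform-in-$a$ enumeration of deck-invariant metabolizers, exact evaluation of the Casson--Gordon signature sums, and non-vanishing for \emph{every} admissible metabolizer simultaneously --- are all deferred (``I would compute,'' ``I expect,'' ``the hard part will be''). Moreover, since $\det=1$, the double branched cover is a homology sphere and carries no characters at all, so you would be forced to higher prime-power covers $\Sigma_n$ whose first homology grows with $a$; calling the metabolizer enumeration there ``bookkeeping'' is unrealistic, and Casson--Gordon sliceness obstructions in the topological category require prime-power $n$, which your ``smallest cover with $H_1\neq 0$'' need not be. The mod $60$ and mod $120$ congruences in the statement do not arise from any $2$-adic degeneration of a signature count as you speculate; they are simply the translation of the elementary conditions $a\equiv 3\pmod 4$ or $a\equiv 5\pmod 8$ intersected with the residues $1,11,37,47,59\pmod{60}$ left open by Lecuona and Miller.
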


\begin{proof}[Proof of Theorem~\ref{thm:main-introduction}]
In Theorems~\ref{thm:main} and ~\ref{thm:main2} given below, we show that $P(a,-a-2,-\frac{(a+1)^2}{2})$ is not topologically slice if $a\equiv 3\pmod{4}$ or $a\equiv 5\pmod{8}$.  Theorem~\ref{thm:main-introduction} immediately follows.
\end{proof}

Hence Conjecture~\ref{conjecture:Lecuona} remains open for $a\equiv 1, 97\pmod{120}$. By the aforementioned results of Lecuona \cite{Lecuona:2015-1} and Miller \cite{Miller:2017-1}, we have the following corollary.

\begin{corollary}\label{cor:1}
Let $K$ be a $3$-stranded pretzel knot which is not of the form $P(a,-a-2,-\frac{(a+1)^2}{2} )$ for $a\equiv 1, 97\pmod{120}$. Then $K$ is smoothly slice if and only if $K$ is ribbon. 
\end{corollary}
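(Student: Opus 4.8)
The plan is to reduce the corollary to the results already assembled in this note, splitting the relevant 3-stranded pretzel knots $K$ into two families and verifying the biconditional in each by elementary means. Throughout I use two standard facts: a ribbon knot is smoothly slice, and a smoothly slice knot is topologically slice. In particular, if $K$ is shown to fail to be topologically slice, then $K$ is neither smoothly slice nor ribbon, so both sides of ``smoothly slice if and only if ribbon'' are false and the equivalence holds trivially.

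First I would handle those $K$ that are \emph{not} of the form $P(a,-a-2,-\frac{(a+1)^2}{2})$ for any odd $a\geq 3$. For these knots the smooth slice-ribbon conjecture was already established by the combined work of Greene--Jabuka \cite{Greene-Jabuka:2011-1} and Lecuona \cite{Lecuona:2015-1}, which is exactly the assertion that $K$ is smoothly slice if and only if it is ribbon. This family therefore requires nothing beyond the citation.

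The remaining case is $K=P(a,-a-2,-\frac{(a+1)^2}{2})$ with $a\geq 3$ odd and $a\not\equiv 1,97\pmod{120}$, where the strategy is to show that $K$ is not even topologically slice and then invoke the observation of the first paragraph. I would combine three inputs: Lecuona's obstruction \cite{Lecuona:2015-1}, which shows $K$ fails to be topologically slice for every odd $a$ outside the classes $1,11,37,47,49,59\pmod{60}$; Miller's Theorem~5.1 \cite{Miller:2017-1}, covering $a\equiv 49\pmod{60}$; and Theorem~\ref{thm:main-introduction} of this note, covering $a\equiv 11,47,59\pmod{60}$ together with $a\equiv 37,61\pmod{120}$.

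The one point needing care is the modular bookkeeping confirming that these three results exhaust all odd $a$ except $a\equiv 1,97\pmod{120}$. Passing to residues modulo $120$, Lecuona's six exceptional classes modulo $60$ lift to the twelve classes $1,61,11,71,37,97,47,107,49,109,59,119\pmod{120}$; Miller's result removes $49$ and $109$, while Theorem~\ref{thm:main-introduction} removes $11,71,47,107,59,119$ (the lifts of $11,47,59\pmod{60}$) together with $37$ and $61$, leaving only $1$ and $97\pmod{120}$ uncovered. Hence every odd $a\geq 3$ with $a\not\equiv 1,97\pmod{120}$ lies in a class handled above, so such a $K$ is not topologically slice. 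I do not anticipate a serious obstacle: the corollary is a formal consequence of the collected results, the only genuine verification being that the union of the treated congruence classes is exactly the complement of $\{1,97\}\pmod{120}$ among the odd residues.
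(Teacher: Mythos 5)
Your proposal is correct and takes essentially the same route as the paper, whose proof is just the one-line citation of the aforementioned results of Greene--Jabuka, Lecuona, Miller, and Theorem~\ref{thm:main-introduction}; you simply make explicit the mod~$120$ bookkeeping the paper leaves implicit. Your residue check (the six exceptional classes mod~$60$ lifting to twelve classes mod~$120$, with Miller removing $49,109$, Theorem~\ref{thm:main-introduction} removing $11,71,47,107,59,119,37,61$, and only $1,97$ surviving) is accurate.
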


By \cite[Theorems 1.2 and 1.6]{Miller:2017-1}, Theorem~\ref{thm:main-introduction} immediately gives  the following corollary. 
\begin{corollary}Let $K$ be a $3$-stranded pretzel knot which is not of the form $P(a,-a-2,-\frac{(a+1)^2}{2} )$ for $a\equiv 1, 97\pmod{120}$. Then $K$ is topologically slice if and only if  either $K$ is ribbon or $\Delta_K(t)=1$. 
\end{corollary}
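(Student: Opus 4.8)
The plan is to establish the two implications of the equivalence separately, exploiting that the backward implication holds for every knot while the forward implication breaks into two cases depending on whether $K$ lies in the exceptional family. The backward implication needs no hypothesis on $K$: if $K$ is ribbon then $K$ is smoothly, hence topologically, slice, and if $\Delta_K(t)=1$ then $K$ is topologically slice by Freedman's theorem. Thus this direction of the stated equivalence is automatic.

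For the forward implication I would assume $K$ is topologically slice and split on the form of $K$. If $K$ is \emph{not} of the form $P(a,-a-2,-\frac{(a+1)^2}{2})$ with odd $a\geq 3$, then \cite[Theorems 1.2 and 1.6]{Miller:2017-1} already assert that $K$ is topologically slice if and only if $\Delta_K(t)=1$ or $K$ is ribbon, so the conclusion is immediate. If instead $K=P(a,-a-2,-\frac{(a+1)^2}{2})$ with odd $a\geq 3$, then the hypothesis of the corollary forces $a\not\equiv 1,97\pmod{120}$, and I would invoke the combined non-sliceness statements of Lecuona \cite{Lecuona:2015-1}, Miller \cite{Miller:2017-1}, and Theorem~\ref{thm:main-introduction} to conclude that such a $K$ is not topologically slice. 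This contradicts the standing assumption, so this case is vacuous and the forward implication follows.

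The step demanding the most care is the congruence bookkeeping underlying the second case: one must verify that the residues $a\not\equiv 1,97\pmod{120}$ (with odd $a\geq 3$) are completely covered by the three non-sliceness sources. Here Lecuona handles every class outside the six hard residues $a\equiv 1,11,37,47,49,59\pmod{60}$; Miller handles $a\equiv 49\pmod{60}$; and Theorem~\ref{thm:main-introduction} handles $a\equiv 11,47,59\pmod{60}$ together with $a\equiv 37,61\pmod{120}$. Refining each hard residue modulo $120$ and deleting the two excluded residues $1$ and $97$ leaves precisely the residues dispatched by Miller and Theorem~\ref{thm:main-introduction}, so no gap survives. In fact the argument yields slightly more: for every exceptional knot not excluded by the hypothesis, both sides of the equivalence are false simultaneously, since non-sliceness rules out the left-hand side while either ribbonness or $\Delta_K(t)=1$ would force topological sliceness and is thereby excluded on the right.
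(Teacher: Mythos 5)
Your proof is correct and is essentially the paper's argument: the paper derives the corollary in one line from \cite[Theorems 1.2 and 1.6]{Miller:2017-1} together with Theorem~\ref{thm:main-introduction} (and the earlier results of Lecuona and Miller), which is exactly your case split, and your mod-$120$ bookkeeping (the hard residues $1,37\pmod{60}$ refine to $1,37,61,97\pmod{120}$, of which $37$ and $61$ are dispatched by Theorem~\ref{thm:main-introduction}) correctly verifies the paper's ``immediately gives.'' You merely make explicit what the paper leaves implicit, namely the backward direction via Freedman and the residue check.
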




\section{Alexander polynomial of $P(a,-a-2,-\frac{(a+1)^2}{2})$}
In this section, we  recall elementary facts on the Alexander polynomial of a slice knot and collect observations of Lecuona on  the Alexander polynomials of 3-stranded pretzel knots $P(a,-a-2,-\frac{(a+1)^2}{2})$ following \cite[pp.\ 2151--2152]{Lecuona:2015-1}.

Fox and Milnor observed in \cite{Fox-Milnor:1966-1} that the Alexander polynomial of  a slice knot can be factored in a special form.

\begin{theorem}[{\cite{Fox-Milnor:1966-1}}]\label{thm:Fox-Milnor}
The Alexander polynomial $\Delta_K(t)$ of a topologically slice knot $K$ satisfies 
\[\Delta_K(t) \doteq f(t) f(t^{-1}) \]
for some $f(t)\in \Z[t^{\pm 1}]$ and $\doteq$ means equality up to multiplication by a unit of $\Z[t^{\pm 1}]$. 
\end{theorem}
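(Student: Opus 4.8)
Since this is the classical theorem of Fox and Milnor, my plan is to reduce it to an algebraic factorization coming from a metabolic Seifert form. First I would recall that from any genus-$g$ Seifert surface $F$ for $K$ one obtains a $2g\times 2g$ integral Seifert matrix $V$, with $(i,j)$ entry $\lk(x_i^+,x_j)$ for a basis $x_1,\dots,x_{2g}$ of $H_1(F)$, and that $\Delta_K(t)\doteq\det(V-tV^T)$. Thus it suffices to produce a single Seifert matrix $V$ whose associated determinant factors as $t^g f(t)f(t^{-1})$ up to units of $\Z[t^{\pm1}]$.

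The geometric heart of the argument is to show that the Seifert form of a topologically slice knot is \emph{metabolic}: for a suitable Seifert surface there is a rank-$g$ direct summand $H\le H_1(F)\cong\Z^{2g}$ on which the Seifert form vanishes. To produce $H$ I would push the interior of $F$ into $D^4$ and glue it along $K$ to a locally flat slice disk $D$, forming a closed genus-$g$ surface $\hat F\subset D^4$. Because $H_2(D^4)=0$, the surface $\hat F$ bounds a compact $3$-manifold $N\subset D^4$, and the ``half-lives-half-dies'' principle applied to $(N,\partial N=\hat F)$ shows that the kernel $H$ of $H_1(\hat F)\cong H_1(F)\to H_1(N)$ has rank exactly $g$. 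That the Seifert form vanishes on $H$ I would check by a linking-number computation: for $x,y\in H$ bounding surfaces in $N$, the value $\lk(x^+,y)$ (with $x^+$ a pushoff) is read off as an intersection number of these bounding surfaces, which one shows to be zero.

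With the metabolizer in hand, I would choose a basis of $H_1(F)$ whose first $g$ vectors span $H$, so that the Seifert matrix acquires the block form $V=\left(\begin{smallmatrix}0 & B\\ C & D'\end{smallmatrix}\right)$ with $g\times g$ blocks and vanishing upper-left block. Then the computation is routine: using the block identity $\det\left(\begin{smallmatrix}0 & P\\ Q & R\end{smallmatrix}\right)=(-1)^g\det P\,\det Q$,
\[
\det(V-tV^T)=(-1)^g\det(B-tC^T)\,\det(C-tB^T).
\]
Setting $f(t)=\det(C-tB^T)\in\Z[t]$ and rewriting $\det(B-tC^T)=(-1)^g t^g f(t^{-1})$ by transposing and factoring out $t$, the whole expression collapses to $t^g f(t)f(t^{-1})$. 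Since $t^g$ is a unit, $\Delta_K(t)\doteq f(t)f(t^{-1})$, as claimed.

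I expect the main obstacle to be the metabolizer step in the \emph{topological} locally flat category rather than the algebra. The rank count and the vanishing of the Seifert form both rely on transversality and linking arguments that are transparent for smooth slice disks but require the good behavior of locally flat embeddings --- existence of a normal bundle for the disk and topological transversality for $N$ --- to carry out honestly; this is precisely where the hypothesis that $K$ is topologically slice, rather than merely that $\Delta_K$ is symmetric, must be used.
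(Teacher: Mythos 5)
The paper does not prove this statement at all: it is quoted verbatim from Fox--Milnor with a citation, and is used as a black box. So there is no in-paper proof to compare against; your proposal must be judged on its own, and it is essentially the classical Fox--Milnor argument (the one in Lickorish's textbook): cap the Seifert surface with the slice disk, bound the resulting closed surface by a $3$-manifold $N\subset D^4$, use half-lives-half-dies for $(N,\hat F)$ to get a rank-$g$ metabolizer, and run the block-determinant computation, which you carry out correctly, including the sign bookkeeping in $\det(B-tC^{T})=(-1)^g t^g f(t^{-1})$. Two small points deserve care. First, half-lives-half-dies gives a half-rank kernel over $\Q$; to get an \emph{integral} basis and hence $f(t)\in\Z[t^{\pm1}]$ you should pass to the saturation, i.e.\ take the kernel of $H_1(F;\Z)\to H_1(N;\Z)/\mathrm{torsion}$, which is a direct summand since the quotient is free --- you assert the direct summand but this is the one-line justification it needs. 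Second, you correctly identify the genuine content in the locally flat setting: the existence of $N$ (via a map of the complement to $S^1$ made transverse) and the disjointness argument for $\lk(x^+,y)=0$ (pushing $N$ off itself) require topological transversality and the existence of normal bundles for locally flat submanifolds of $4$-manifolds, which is Freedman--Quinn technology; the original Fox--Milnor argument was PL/smooth, and citing this machinery explicitly is exactly what upgrades it to the topological category in which the paper uses the theorem. With those two points made precise, your proof is complete and is the standard one.
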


Let $R$ be either $\Z$ or $\Z_p$ for some prime $p$. We say that a Laurent polynomial $\Delta(t)\in R[t^{\pm 1}]$ admits a \emph{Fox-Milnor factorization} if \[\Delta(t) \doteq f(t) f(t^{-1}) \]
for some $f(t)\in R[t^{\pm 1}]$ and $\doteq$ means equality up to multiplication by a unit of $R[t^{\pm 1}]$. By Theorem~\ref{thm:Fox-Milnor}, for a topologically slice knot $K$,  the Alexander polynomial $\Delta_K(t)$ and its mod $p$ reduction $\overline{\Delta}_K^p(t)$ admit Fox-Milnor factorizations.

\begin{remark}\label{rmk:SR}A polynomial $f(t)\in R[t^{\pm 1}]$ is \emph{self-reciprocal} if $f(t)=t^{\deg f}f(t^{-1})$. The Alexander polynomials of knots and the cyclotomic polynomials are self-reciprocal. Irreducible factors of a self-reciprocal polynomial are all self-reciprocal or come in \emph{reciprocal pairs}, that is, if $g(t)\mid f(t)$ and $g(t)$ is not self-reciprocal then $t^{\deg g}  g(t^{-1})$ is also a factor of $f(t)$. It follows that a self-reciprocal polynomial admits a Fox-Milnor factorization if and only if the multiplicities of its irreducible self-reciprocal factors are all even.
\end{remark}

For an odd integer $a\geq 3$, let $P_a$ be the 3-stranded pretzel knot $P(a,-a-2,-\frac{(a+1)^2}{2})$. In \cite[Appendix]{Lecuona:2015-1}, Lecuona computed the Alexander polynomial $\Delta_{P_a}(t)$: 
\begin{align*}
\textstyle  \Delta_{P_a}(t) 
   &\textstyle \doteq \frac{(t^{a+2}+1)(t^a+1)}{(t+1)^2}-\frac{(a+1)^2}{4}t^{a-1}(t-1)^2\\
 &=\textstyle  \prod\limits_{\substack{d \mid a \\ d \neq 1}} \Phi_{d}(-t) \prod\limits_{\substack{d \mid a+2 \\ d \neq 1}} \Phi_{d}(-t)-\frac{(a+1)^2}{4}t^{a-1}(t-1)^2
\end{align*}
where $\Phi_{d}(t)$ is the $d$th cyclotomic polynomial.
For a prime $p$ dividing $\frac{a+1}{2}$, the mod $p$ reduction $\overline{\Delta}_{P_a}^p(t)$ of $\Delta_{P_a}(t)$ is a product of the mod $p$ reductions of some cyclotomic polynomials:

\begin{equation}\label{eqn:Alexander}
  \textstyle\overline{\Delta}_{P_a}^{p}(t) \doteq \prod\limits_{\substack{d \mid a \\ d \neq 1}} \overline{\Phi}_{d}^p(-t) \prod\limits_{\substack{d \mid a+2 \\ d \neq 1}} \overline{\Phi}_{d}^p(-t) \in \Z_p[t^{\pm 1}].
\end{equation}

Suppose that $P_a$ is topologically slice. Then both $\Delta_{P_a}(t)$ and $\overline{\Delta}_{P_a}^p(t)$ admit Fox-Milnor factorizations by Theorem~\ref{thm:Fox-Milnor}. Since $\gcd(p,a)=\gcd(p,a+2)=\gcd(a, a+2)=1$, all the irreducible factors of $\overline{ \Phi}_d^p(-t)$ appearing in \eqref{eqn:Alexander} are distinct and have multiplicity 1. Let $N_d^p$ be the number  of  irreducible factors of $\overline{\Phi}_d^p$. By Remark~\ref{rmk:SR}, $N_d^p$ is even for any integer $d> 1$ dividing either $a$ or $a+2$.  Hence we have the following proposition which is due to Lecuona (see \cite[pp.\ 2151--2152]{Lecuona:2015-1}).

\begin{proposition}[{\cite[pp.\ 2151--2152]{Lecuona:2015-1}}]\label{prpn:Lecuona-observation}
For an odd integer $a\geq 3$, let $P_a$ be the $3$-stranded pretzel knot $P(a,-a-2,-\frac{(a+1)^2}{2})$. 
Let $p$ be a prime dividing $\frac{a+1}{2}$ and $d>1$ be an integer dividing either $a$ or $a+2$. If $P_a$ is topologically slice, then the following holds.
\begin{enumerate}
\item\label{item:Lecuona-observation-1} $\Delta_{P_a}(t)$ admits a Fox-Milnor factorization.
\item\label{item:Lecuona-observation-2} $\overline{\Delta}_{P_a}^p(t)$ admits a Fox-Milnor factorization.
\item\label{item:Lecuona-observation-3} $\overline{\Phi}_d^p(-t)$ has no irreducible self-reciprocal factor. 
\item\label{item:Lecuona-observation-4} $N_d^p$ is even.
\end{enumerate}
\end{proposition}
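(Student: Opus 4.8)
The plan is to handle the four items in the order stated, deriving \eqref{item:Lecuona-observation-3} and \eqref{item:Lecuona-observation-4} from \eqref{item:Lecuona-observation-1}, \eqref{item:Lecuona-observation-2}, and a single squarefreeness observation. Item \eqref{item:Lecuona-observation-1} is immediate: since $P_a$ is topologically slice, Theorem~\ref{thm:Fox-Milnor} gives $\Delta_{P_a}(t)\doteq f(t)f(t^{-1})$ for some $f(t)\in\Z[t^{\pm1}]$. For \eqref{item:Lecuona-observation-2} I would simply reduce this factorization modulo $p$: writing $\Delta_{P_a}(t)=\pm t^k f(t)f(t^{-1})$ and applying the ring homomorphism $\Z[t^{\pm1}]\to\Z_p[t^{\pm1}]$ yields $\overline{\Delta}_{P_a}^p(t)\doteq \overline{f}(t)\overline{f}(t^{-1})$, because the image of a unit $\pm t^k$ of $\Z[t^{\pm1}]$ is a unit of $\Z_p[t^{\pm1}]$ and reduction commutes with $t\mapsto t^{-1}$. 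Thus $\overline{\Delta}_{P_a}^p(t)$ admits a Fox-Milnor factorization.

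The substantive step is to show that the product in \eqref{eqn:Alexander} is squarefree over $\Z_p$, i.e. every irreducible factor occurs with multiplicity $1$. Since $p\mid\frac{a+1}{2}$ we have $p\mid a+1$, so $\gcd(p,a)=\gcd(p,a+2)=1$ and hence $p\nmid d$ for every index $d$ dividing $a$ or $a+2$. For such $d$ the reduction $\overline{\Phi}_d^p(t)$ is separable over $\Z_p$, its roots being the primitive $d$-th roots of unity in $\overline{\Z_p}$, which remain distinct precisely because $p\nmid d$; hence each $\overline{\Phi}_d^p(-t)$ is squarefree. For two distinct indices $d\neq d'$ (both $>1$ and prime to $p$) the polynomials $\overline{\Phi}_d^p$ and $\overline{\Phi}_{d'}^p$ are coprime, as their roots are roots of unity of distinct exact orders $d$ and $d'$. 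Finally $\gcd(a,a+2)=1$ makes the two index sets $\{d>1:d\mid a\}$ and $\{d>1:d\mid a+2\}$ disjoint, so across both products of \eqref{eqn:Alexander} all indices are distinct; combined with pairwise coprimality this shows the entire product is squarefree.

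With squarefreeness established, \eqref{item:Lecuona-observation-3} follows by contradiction. Each $\Phi_d(-t)$ is self-reciprocal up to the unit $(-1)^{\varphi(d)}$, so its reduction is self-reciprocal up to a unit and therefore $\overline{\Delta}_{P_a}^p(t)$, being the reduction of an Alexander polynomial, is self-reciprocal. By \eqref{item:Lecuona-observation-2} it admits a Fox-Milnor factorization, so Remark~\ref{rmk:SR} forces every irreducible self-reciprocal factor of $\overline{\Delta}_{P_a}^p(t)$ to occur with even multiplicity. But squarefreeness makes every irreducible factor have multiplicity exactly $1$; hence $\overline{\Delta}_{P_a}^p(t)$ has no irreducible self-reciprocal factor, and neither does its divisor $\overline{\Phi}_d^p(-t)$, proving \eqref{item:Lecuona-observation-3}. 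For \eqref{item:Lecuona-observation-4} I would apply the reciprocal-pairing structure of Remark~\ref{rmk:SR} to the self-reciprocal polynomial $\overline{\Phi}_d^p(-t)$: since it has no self-reciprocal irreducible factor, each irreducible factor $g$ pairs with the distinct factor $t^{\deg g}g(t^{-1})$, so its irreducible factors split into reciprocal pairs and $N_d^p$ is even.

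The main obstacle is the squarefreeness step, where one must rule out that reduction modulo $p$ creates a repeated irreducible factor within a single $\overline{\Phi}_d^p(-t)$ or a shared irreducible factor between two of them; this is exactly where all three hypotheses $\gcd(p,a)=\gcd(p,a+2)=\gcd(a,a+2)=1$ are used, guaranteeing simultaneously the separability of each cyclotomic reduction and the disjointness of the root sets across distinct indices. Once squarefreeness is in hand, \eqref{item:Lecuona-observation-3} and \eqref{item:Lecuona-observation-4} are short formal consequences of Remark~\ref{rmk:SR}.
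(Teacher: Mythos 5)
Your proposal is correct and follows essentially the same route as the paper: items \eqref{item:Lecuona-observation-1} and \eqref{item:Lecuona-observation-2} from Theorem~\ref{thm:Fox-Milnor} (with the mod-$p$ reduction of the factorization made explicit), squarefreeness of the product in \eqref{eqn:Alexander} via $\gcd(p,a)=\gcd(p,a+2)=\gcd(a,a+2)=1$, and then \eqref{item:Lecuona-observation-3} and \eqref{item:Lecuona-observation-4} as formal consequences of Remark~\ref{rmk:SR}. You merely supply details the paper leaves implicit (separability of $\overline{\Phi}_d^p$ when $p\nmid d$, coprimality of reductions for distinct indices, and the reciprocal-pairing count), so there is nothing substantively different to compare.
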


Lecuona showed that $N_d^p$ is odd for some such $p$ and $d$ when $a\equiv 3,7,9 \pmod{12}$ and $a\equiv 13,23,25,35\pmod{60}$ to obtain the non-sliceness of $P_a$. So did Miller \cite{Miller:2017-1} when $a\equiv 49\pmod{60}$. For the case $a\equiv 5 \pmod{12}$, Lecuona found an irreducible self-reciprocal factor $t^2+t+1$ of $\bar\Delta_{P_a}^2(t)$, which has multiplicity 1.

The remainder of this paper is organized as follows. In Section~\ref{section:method1}, we interpret the condition \eqref{item:Lecuona-observation-4} in terms of a Legendre symbol and show that \eqref{item:Lecuona-observation-4} is not satisfied if either $a\equiv 3,5\pmod{8}$ or $a\equiv 5\pmod{12}$. In Section~\ref{sec:SR}, we discuss two  conditions which are equivalent to \eqref{item:Lecuona-observation-3} and show that \eqref{item:Lecuona-observation-3} is not satisfied if $a\equiv 3\pmod{4}$. In Section~\ref{sec:examples}, we mention some knots $P_a$ satisfying \eqref{item:Lecuona-observation-2}, \eqref{item:Lecuona-observation-3} and \eqref{item:Lecuona-observation-4}.

\section{Method 1: Using the Legendre symbol $\bigl(\frac{p}{d}\bigr)$}\label{section:method1}
The goal of this section is to prove Theorem~\ref{thm:main} which states that $P_a$ is not topologically slice if $a\equiv 3,5 \pmod{8}$ or $a\equiv 5 \pmod{12}$.  For this purpose, we will observe in Proposition~\ref{prpn:Legendre} that determining the parity of $N_d^p$ is equivalent to computing the Legendre symbol $\bigl(\frac{p}{d}\bigr)$.

For the reader's convenience, we recall the definition and properties of Legendre symbols. For an integer $n$ and an odd prime $d$, the \emph{Legendre symbol} $\bigl(\frac{n}{d}\bigr)$ is defined by 
\[\bigl(\tfrac{n}{d}\bigr) = \left\{ \begin{array}{ll} 
-1 & \textrm{if $n$ is not a square modulo $d$}, \\
	0 & \textrm{if $n$ is divisible by $d$}, \\
	
	1  & \textrm{if $n$ is a nonzero square modulo $d$}.
	 \end{array}\right. \]
We need the following standard properties on Legendre symbols (for example, see \cite[pp.\ 6--7]{Serre}).
\begin{lemma}\label{lemma:QR}
Let $d,p$ be odd primes and $a,b$ be integers.
 \begin{enumerate}
   \item $\bigl(\frac{a}{d}\bigr) = \bigl(\frac{b}{d}\bigr)$ if $a\equiv b \pmod{d}$.
   \item $\bigl(\frac{ab}{d}\bigr) = 
   \bigl(\frac{a}{d}\bigr)\bigl(\frac{b}{d}\bigr)$. 
   In particular, $\bigl(\frac{a^{2}}{d}\bigr) = 1$.
   \item $ \bigl(\frac{p}{d}\bigr)=(-1)^{\frac{(p-1)(d-1)}{4}} \bigl(\frac{d}{p}\bigr)$.
   \item $\bigl(\frac{-1}{d}\bigr) = (-1)^{\frac{d-1}{2}}$.
   \item $\bigl(\frac{2}{d}\bigr) = (-1)^{\frac{d^{2}-1}{8}}$.
 \end{enumerate}
\end{lemma}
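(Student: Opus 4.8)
The plan is to reduce everything to Euler's criterion, which states that for an odd prime $d$ and an integer $n$ with $d \nmid n$ one has $\bigl(\frac{n}{d}\bigr) \equiv n^{(d-1)/2} \pmod{d}$. This in turn follows from the fact that $(\Z/d\Z)^\times$ is cyclic of order $d-1$: the nonzero squares are exactly the elements whose $(d-1)/2$-th power equals $1$, while the non-squares raise to $-1$.

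Properties (1), (2), and (4) then fall out immediately. Property (1) is just the definition, since whether $n$ is a square modulo $d$ (or is divisible by $d$) depends only on $n \bmod d$. For property (2), I would multiply the two instances of Euler's criterion to get $\bigl(\frac{ab}{d}\bigr) \equiv (ab)^{(d-1)/2} = a^{(d-1)/2} b^{(d-1)/2} \equiv \bigl(\frac{a}{d}\bigr)\bigl(\frac{b}{d}\bigr) \pmod{d}$; since both sides lie in $\{-1,0,1\}$ and $d > 2$, the congruence forces equality, and the special case $\bigl(\frac{a^2}{d}\bigr)=1$ is then immediate. Property (4) is Euler's criterion applied to $n=-1$.

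The real content is in properties (3) and (5), which are the law of quadratic reciprocity and its second supplementary law. For property (5) I would invoke Gauss's lemma, namely $\bigl(\frac{n}{d}\bigr) = (-1)^{\mu}$ where $\mu$ counts the least positive residues of $n, 2n, \dots, \tfrac{d-1}{2}n$ that exceed $d/2$, and apply it to $n=2$; a direct count of how many of $2, 4, \dots, d-1$ lie above $d/2$ yields the exponent $(d^2-1)/8$ after separating according to the class of $d$ modulo $8$. Property (3) is the deepest statement, and the cleanest self-contained route is Eisenstein's lattice-point argument, again built on Gauss's lemma, which counts integer points beneath the two relevant lines inside a rectangle and identifies the resulting parity with $\tfrac{(p-1)(d-1)}{4}$.

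I expect property (3) to be the main obstacle, since a genuinely self-contained proof of full quadratic reciprocity is nontrivial. As all five statements are entirely classical, in practice I would not reproduce these arguments but simply cite a standard reference such as \cite{Serre}, which is exactly the role this lemma plays in what follows.
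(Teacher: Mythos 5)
Your proposal is correct and lands exactly where the paper does: the paper offers no proof of this lemma at all, simply citing \cite[pp.~6--7]{Serre}, which is precisely the course of action you recommend in your final paragraph. The classical sketches you give along the way (Euler's criterion for (1), (2), (4), Gauss's lemma for (5), and Eisenstein's lattice-point argument for (3)) are standard and sound.
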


\begin{proposition}\label{prpn:Legendre} Let $p$ and $d$ be distinct primes and $d$ is odd. Then the number $N_d^p$ of irreducible factors of $\overline{\Phi}_d^p(t)$ is even if and only if the Legendre symbol $\bigl(\frac{p}{d}\bigr)=1$.
\end{proposition}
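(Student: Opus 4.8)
The plan is to express $N_d^p$ in closed form using the factorization theory of cyclotomic polynomials over finite fields, and then to match the resulting parity condition with $\bigl(\frac{p}{d}\bigr)$ via Euler's criterion.

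First I would record that, because $d$ is prime, $\Phi_d(t)=1+t+\cdots+t^{d-1}$ has degree $d-1$ and its roots are exactly the primitive $d$th roots of unity. Since $p\neq d$, the polynomial $t^d-1$ is separable over $\Z_p=\mathbb{F}_p$ (its formal derivative $dt^{d-1}$ is a unit times $t^{d-1}$ as $p\nmid d$, and shares no root with $t^d-1$), so its divisor $\overline{\Phi}_d^p(t)$ is also separable. Hence $\overline{\Phi}_d^p$ is a product of $N_d^p$ \emph{distinct} irreducible factors in $\Z_p[t]$.

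Next, the key structural input is that all of these irreducible factors have a common degree $f$, equal to the multiplicative order of $p$ modulo $d$. Indeed, a primitive $d$th root of unity $\zeta$ in an algebraic closure of $\Z_p$ lies in the field $\mathbb{F}_{p^k}$ if and only if $d\mid p^k-1$, that is, if and only if $f\mid k$; thus $\mathbb{F}_p(\zeta)=\mathbb{F}_{p^f}$ and the minimal polynomial over $\Z_p$ of each root of $\overline{\Phi}_d^p$ has degree $f$. Counting degrees then gives
\[
N_d^p=\frac{\deg\overline{\Phi}_d^p}{f}=\frac{d-1}{f}.
\]

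Finally I would invoke Euler's criterion, $\bigl(\tfrac{p}{d}\bigr)\equiv p^{(d-1)/2}\pmod d$. Since $p^{(d-1)/2}\equiv\pm1\pmod d$, we have $\bigl(\tfrac{p}{d}\bigr)=1$ if and only if $p^{(d-1)/2}\equiv1\pmod d$, which holds precisely when $f=\operatorname{ord}_d(p)$ divides $\tfrac{d-1}{2}$, that is, when $(d-1)/f$ is even. Combining this with the displayed formula, $N_d^p=(d-1)/f$ is even if and only if $\bigl(\tfrac{p}{d}\bigr)=1$, as claimed. The only step that is more than bookkeeping is the common-degree fact in the third paragraph, but this is a standard consequence of finite-field theory; everything else is a direct parity comparison.
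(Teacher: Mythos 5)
Your proof is correct, and it parts ways with the paper's argument at the decisive step. Both proofs rest on the same closed form $N_d^p=\varphi(d)/f=(d-1)/f$, where $f=\operatorname{ord}_d(p)$: the paper simply quotes this fact (recast as $N_d^p=|\Z_d^\times/p\Z_d^\times|$), whereas you actually prove it, via separability of $t^d-1$ over $\mathbb{F}_p$ and the equal-degree property of the irreducible factors of $\overline{\Phi}_d^p$. The real divergence is in how the parity of $(d-1)/f$ is linked to the Legendre symbol. You invoke Euler's criterion, $\bigl(\tfrac{p}{d}\bigr)\equiv p^{(d-1)/2}\pmod d$, which reduces the equivalence to the one-line divisibility check that $f\mid\tfrac{d-1}{2}$ exactly when $(d-1)/f$ is even. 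The paper instead argues both directions by hand inside the quotient group $\Z_d^\times/p\Z_d^\times$: writing $p=y^m$ for a generator $y$ of the cyclic group $\Z_d^\times$ to get $N_d^p\mid m$ in one direction, and in the other showing that a square root $x$ of $p$ either has order $2$ in the quotient (forcing even order by Lagrange) or lies in $p\Z_d^\times$, forcing $|p\Z_d^\times|$ to be odd and hence $N_d^p=(d-1)/|p\Z_d^\times|$ to be even. In effect, the paper re-proves the relevant half of Euler's criterion --- that the squares in $\Z_d^\times$ form the unique index-two subgroup, cut out by $x^{(d-1)/2}=1$ --- while you outsource that to the standard theorem; your route is shorter and makes the mechanism transparent, while the paper's is self-contained at that point (its Lemma~\ref{lemma:QR} does not list Euler's criterion). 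One cosmetic remark: when you say the roots of $\overline{\Phi}_d^p$ are the primitive $d$th roots of unity, the one-line justification in characteristic $p$ is that $t^d-1=(t-1)\overline{\Phi}_d^p(t)$ is separable and $d$ is prime, so every root of $\overline{\Phi}_d^p$ has exact order $d$; this is standard and does not constitute a gap.
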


\begin{proof}Note that $N_d^p$ is equal to the quotient of Euler's totient function $\varphi(d)$ divided by the multiplicative order of $p$ modulo $d$  and hence $N_d^p=|\Z_d^\times / p \Z_d^\times|$.

Suppose that $N_d^p=|\Z_d^\times / p\Z_d^\times |$ is even.  Since $d$ is prime, $\Z_d^\times$ is cyclic. Let $y$ be a generator of $\Z_d^\times$ and $m$ be an integer such that $y^m\equiv p \pmod{d}$. Since $y^m$ is trivial in $\Z_d^\times/p\Z_d^\times$ and $y$ also generates $\Z_d^\times/p\Z_d^\times$,  $m$ is a multiple of $N_d^p$. It follows that $m$ is even and  $(y^{m/2})^2\equiv p \pmod{d}$ or $\bigl(\frac{p}{d}\bigr)=1$.

Suppose that $\bigl(\frac{p}{d}\bigr)=1$. Choose an integer $x\in \Z_d^\times$  such that $x^2\equiv p \pmod{d}$. Since $x^2$ is trivial in $\Z_d^\times/p\Z_d^\times$, the order of $x$ in $\Z_d^\times/p\Z_d^\times$ is either 1 or 2. In the latter case,  $N_d^p$ is automatically even since $N_d^p$ is equal to the order of $\Z_d^\times/p\Z_d^\times$. Suppose that $x$ is trivial in $\Z_d^\times/p\Z_d^\times$. Then $x\equiv p^m \pmod{d}$ for some $m\in\Z$. Since $x^2\equiv p \pmod{d}$ and $x\equiv p^m \pmod{d}$, 
$p^{2m-1}$ is trivial in $\Z_d^\times$.  So $|p\Z_d^\times|$ is a divisor of $2m-1$ so $|p\Z_d^\times|$ is odd. Since $d$ is an odd prime, $|\Z_d^\times|$ is even and hence
\[N_d^p=|\Z_d^\times/p\Z_d^\times |=\frac{|\Z_d^\times|}{|p\Z_d^\times|}\]
 is even.
\end{proof}

We are ready to prove the main result of this section.

\begin{theorem}\label{thm:main}
For an odd integer $a\geq 3$, let $P_a$ be the $3$-stranded pretzel knot $P(a,-a-2,-\frac{(a+1)^2}{2})$. If $a\equiv 3,5 \pmod{8}$ or $a\equiv 5 \pmod{12}$, then the Alexander polynomial $\Delta_{P_a}(t)$ does not have a Fox-Milnor factorization and hence $P_a$ is not topologically slice.
\end{theorem}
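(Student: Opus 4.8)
The plan is to argue by contradiction and reduce everything to a single Legendre-symbol obstruction that quadratic reciprocity then violates. Write $b=\tfrac{a+1}{2}$ and suppose $\Delta_{P_a}(t)$ admits a Fox--Milnor factorization. For any prime $p\mid b$ the product formula \eqref{eqn:Alexander} is valid, since $p\mid b$ makes $\tfrac{(a+1)^2}{4}t^{a-1}(t-1)^2$ vanish modulo $p$; so, reducing the assumed factorization modulo $p$ and repeating the argument preceding Proposition~\ref{prpn:Lecuona-observation} (via Theorem~\ref{thm:Fox-Milnor} and Remark~\ref{rmk:SR}), condition~\eqref{item:Lecuona-observation-4} must hold: $N^p_d$ is even, equivalently $\bigl(\tfrac{p}{d}\bigr)=1$ by Proposition~\ref{prpn:Legendre}, for every prime $p\mid b$ and every odd prime $d\mid a$. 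It therefore suffices to exhibit one admissible pair $(p,d)$ with $\bigl(\tfrac{p}{d}\bigr)=-1$; I will only need prime divisors of $a$.

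Since the factorization of $a$ is unknown, I would detect the offending $d$ through the Jacobi symbol. For a fixed prime $p\mid b$, multiplicativity gives $\bigl(\tfrac{p}{a}\bigr)=\prod_{d\mid a}\bigl(\tfrac{p}{d}\bigr)^{e_d}$ over the prime divisors $d$ of $a$; hence $\bigl(\tfrac{p}{a}\bigr)=-1$ forces $\bigl(\tfrac{p}{d}\bigr)=-1$ for some prime $d\mid a$. Because $\gcd(a,b)=1$, such a $d$ is automatically distinct from $p$, so the pair is admissible. The contradiction will thus follow from producing a prime $p\mid b$ with Jacobi symbol $\bigl(\tfrac{p}{a}\bigr)=-1$.

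For $a\equiv 3,5\pmod 8$ I would take the product of the relations $\bigl(\tfrac{p}{a}\bigr)=1$ over all primes $p\mid b$ to get $\bigl(\tfrac{b}{a}\bigr)=1$. On the other hand $2b=a+1\equiv 1\pmod a$, so $b\equiv 2^{-1}\pmod a$ and $\bigl(\tfrac{b}{a}\bigr)=\bigl(\tfrac{2}{a}\bigr)=(-1)^{(a^2-1)/8}$, which is $-1$ precisely for $a\equiv 3,5\pmod 8$. For $a\equiv 5\pmod{12}$ the symbol $\bigl(\tfrac{2}{a}\bigr)$ can be $+1$, so I would instead single out $p=3$: here $a\equiv 2\pmod 3$ gives $3\mid b$, making $p=3$ admissible, while reciprocity (the Jacobi form of Lemma~\ref{lemma:QR}) together with $a\equiv 1\pmod 4$ and $a\equiv 2\pmod 3$ gives $\bigl(\tfrac{3}{a}\bigr)=\bigl(\tfrac{a}{3}\bigr)=\bigl(\tfrac{2}{3}\bigr)=-1$. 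Either way $\bigl(\tfrac{p}{a}\bigr)=-1$ contradicts the slice-forced value $1$, so $\Delta_{P_a}(t)$ has no Fox--Milnor factorization and Theorem~\ref{thm:Fox-Milnor} gives non-sliceness.

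The reciprocity arithmetic is routine; the point demanding care is the prime $2$. The single identity $\bigl(\tfrac{b}{a}\bigr)=\bigl(\tfrac{2}{a}\bigr)=-1$ must cover both $a\equiv 3\pmod 8$, where $b$ is even so $p=2$ is itself admissible and genuinely contributes, and $a\equiv 5\pmod 8$, where $b$ is odd so $p=2$ is not admissible and the contradiction has to be extracted entirely from the odd prime divisors of $b$. This is exactly why the aggregate Jacobi computation is the right tool: it never names $d$ but only guarantees its existence, which is essential since $a$ is an arbitrary odd integer rather than a prime. I would verify carefully that in the $a\equiv 3\pmod 8$ subcase the factor $\bigl(\tfrac{2}{a}\bigr)$ inside $\bigl(\tfrac{b}{a}\bigr)$ is forced to $1$ by condition~\eqref{item:Lecuona-observation-4} applied with $p=2$, so the argument does not quietly assume what it is proving.
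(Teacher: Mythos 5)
Your proposal is correct, and while it rests on exactly the same obstruction as the paper --- Proposition~\ref{prpn:Lecuona-observation}\eqref{item:Lecuona-observation-4} converted into the condition $\bigl(\tfrac{p}{d}\bigr)=1$ via Proposition~\ref{prpn:Legendre} --- the arithmetic that produces the contradiction is organized in a genuinely different way. The paper argues in three separate cases with hand-picked primes: for $a\equiv 3\pmod 8$ it takes $p=2$ and uses the existence of a prime divisor $d\equiv 3,5\pmod 8$ of $a$; for $a\equiv 5\pmod{12}$ it takes $p=3$ and a prime divisor $d\equiv 5,7\pmod{12}$; and for $a\equiv 5\pmod 8$ it chooses a prime $p\equiv 3\pmod 4$ of $b=\tfrac{a+1}{2}$ (available since $b\equiv 3\pmod 4$) and evaluates the product $\prod_i\bigl(\tfrac{p}{p_i}\bigr)^{e_i}\prod_j\bigl(\tfrac{p}{q_j}\bigr)^{f_j}$ over the factorization of $a$, tracking reciprocity signs through the parity of $f$ and using $a\equiv -1\pmod p$. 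That last computation is, in substance, the Jacobi-symbol evaluation $\bigl(\tfrac{p}{a}\bigr)=-1$; your contribution is to push the aggregation one step further, multiplying the forced relations $\bigl(\tfrac{p}{a}\bigr)=1$ over \emph{all} primes $p\mid b$ and exploiting $2b=a+1\equiv 1\pmod a$ to get $\bigl(\tfrac{b}{a}\bigr)=\bigl(\tfrac{2}{a}\bigr)=(-1)^{(a^2-1)/8}$. This disposes of $a\equiv 3\pmod 8$ and $a\equiv 5\pmod 8$ in a single stroke with no reciprocity and no choice of a distinguished $p$; only the $a\equiv 5\pmod{12}$ case needs (Jacobi) reciprocity, where $\bigl(\tfrac{3}{a}\bigr)=\bigl(\tfrac{a}{3}\bigr)=\bigl(\tfrac{2}{3}\bigr)=-1$ replaces the paper's search for a prime $d\equiv 5,7\pmod{12}$. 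Your two flagged delicacies are real and correctly resolved: $p=2$ is admissible exactly when $a\equiv 3\pmod 4$, so in the $3\pmod 8$ subcase it legitimately contributes a forced factor $\bigl(\tfrac{2}{a}\bigr)=1$ (and since every forced factor is $+1$, the exponents $v_p(b)$ never matter); and $\gcd\bigl(a,\tfrac{a+1}{2}\bigr)=1$ guarantees $p\neq d$, so Proposition~\ref{prpn:Legendre} applies. The trade-off: the paper's version is self-contained at the level of Legendre symbols with explicit residue classes for $d$, while yours is shorter, unifies the mod $8$ cases, and makes visible why $a\bmod 8$ is the governing invariant, at the modest cost of invoking the Jacobi formalism (multiplicativity, periodicity in the numerator, and the supplementary law for $\bigl(\tfrac{2}{a}\bigr)$), which Lemma~\ref{lemma:QR} states only for prime moduli.
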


\begin{proof}
Assume that $a$ is an integer with $a\equiv 3 \pmod{8}$. Then $2$ is a prime divisor of~$\frac{a+1}{2}$ and $a$ has a prime divisor $d$ such that $d\equiv 3\pmod{8}$ or $d\equiv 5\pmod{8}$. By Lemma~\ref{lemma:QR}(5), we can easily compute the Legendre symbol $\bigl(\frac{2}{d}\bigr)$ as follows: 
\[\bigl(\tfrac{2}{d}\bigr)=(-1)^{(d^2-1)/8}=-1.\]
By Proposition~\ref{prpn:Legendre}, $N_d^2$ is odd and $\Delta_{P_a}(t)$ does not have a Fox-Milnor factorization.

Assume that $a$ is an integer such that $a\equiv 5 \pmod{12}$. Then $3$ is a prime divisor of $\frac{a+1}{2}$ and $a$ has a prime divisor $d$ such that $d\equiv 5\pmod{12}$ or $d\equiv 7\pmod{12}$. By the quadratic reciprocity law given in Lemma~\ref{lemma:QR}(3), 
\[\bigl(\tfrac{3}{d}\bigr)=\bigl(\tfrac{d}{3}\bigr)(-1)^{(d-1)/2}.\]
When $d\equiv 5 \pmod{12}$, $\bigl(\frac{d}{3}\bigr)=\bigl(\frac{2}{3}\bigr)=-1$ by Lemma~\ref{lemma:QR}(1), and $(-1)^{(d-1)/2}=1$. When $d\equiv 7\pmod{12}$, $\bigl(\frac{d}{3}\bigr)=\bigl(\frac{1}{3}\bigr)=1$ and $(-1)^{(d-1)/2}=-1$. Hence 
\[\bigl(\tfrac{3}{d}\bigr)=\bigl(\tfrac{d}{3}\bigr)(-1)^{(d-1)/2}=-1.\]
By Proposition~\ref{prpn:Legendre}, $N_d^3$ is odd and $\Delta_{P_a}(t)$ does not have a Fox-Milnor factorization.

Assume that $a$ is an integer such that $a\equiv 5 \pmod{8}$. Then $\frac{a+1}{2}\equiv 3 \pmod{4}$, so $\frac{a+1}{2}$ has a prime divisor $p$ such that $p\equiv 3\pmod{4}$. Let 
\begin{center}
\(a=\prod\limits_{i=1}^{r}p_{i}^{e_{i}}\cdot  \prod\limits_{j=1}^{s}q_{i}^{f_{i}}\)
\end{center}
be the prime factorization of $a$ such that $p_{i} \equiv 1\pmod{4}$ and $q_{j} \equiv 3\pmod{4}$ for all $i$ and $j$. Since $a \equiv 1\pmod{4}$, $f=\sum_{j=1}^{s}f_{i}$ is even. Since $f$ is even and $a\equiv -1\pmod{p}$, we can observe that

\begin{center}\(\prod_{i=1}^{r} \bigl( \frac{p}{p_{i}} \bigr)^{e_{i}}
    \prod_{j=1}^{s} \bigl( \frac{p}{q_{j}} \bigr)^{f_{j}}=
    \prod_{i=1}^{r} \bigl( \frac{p_{i}}{p} \bigr)^{e_{i}}
    \prod_{j=1}^{s} (-1)^{f_{j}}\bigl( \frac{q_{j}}{p} \bigr)^{f_{j}}=(-1)^f \bigl( \frac{a}{p} \bigr)=\bigl( \frac{-1}{p} \bigr) = -1 \)
    \end{center}
     by using Lemma~\ref{lemma:QR}. There is a prime factor $d$ of $a$ such that  $\bigl( \frac{p}{d} \bigr)=-1$. By Proposition~\ref{prpn:Legendre}, $N_d^p$ is odd and $\Delta_{P_a}(t)$ does not have a Fox-Milnor factorization.
\end{proof}

\begin{remark}
The case $a\equiv 5 \pmod{12}$ is already proved by Lecuona, but Theorem~\ref{thm:main} gives an alternative proof. The case $a\equiv 3\pmod{8}$ is also proved in Theorem~\ref{thm:main2} below.
\end{remark}

\section{Method 2: Finding a self-reciprocal irreducible factor}
\label{sec:SR}
The goal of this section is to prove Theorem~\ref{thm:main2} which states that $P_a$ is not topologically slice if $a\equiv 3 \pmod{4}$. We will find an irreducible self-reciprocal factor of the mod $p$ reduction $\overline{\Phi}_d^p(t)$ of the $d$th cyclotomic polynomial $\Phi_d(t)$ for some $d$ and $p$. For this purpose, in Theorem~\ref{thm:SR}, we discuss equivalent conditions regarding the existence of a self-reciprocal irreducible factor of $\overline{\Phi}_d^p(t)$. Note that $\overline{\Phi}_d^p(t)$ has an irreducible self-reciprocal factor if and only if $\overline{\Phi}_d^p(-t)$ has an irreducible self-reciprocal factor. From this,  we can understand when $\overline{\Phi}_d^p(-t)$ has an irreducible self-reciprocal factor and apply  Proposition~\ref{prpn:Lecuona-observation}\eqref{item:Lecuona-observation-3} to conclude that $P_a$ is not slice if $a\equiv 3\pmod{4}$. 

We remark that Theorem~\ref{thm:SR} is  partly due to Wu, Yue and Fan \cite{Wu-Yue-Fan:2020-1}. In fact, Wu, Yue and Fan showed in \cite[Theorem 2.3]{Wu-Yue-Fan:2020-1} that the conditions \eqref{item:SR-1} and \eqref{item:SR-2} are equivalent under a weaker assumption on $p$ and $d$. 

To simplify our discussion, for a nonzero integer $n$ and a prime $q$, we denote $v_q(n)$ and $u_q(n)$ be the integers such that 
\[n = q^{v_q(n)}u_q(n)\]
and $u_q(n)$ is not divisible by $q$. The number $v_q(n)$ is usually called the \emph{$q$-adic valuation} of $n$. Note that $u_2(n)$ is odd and if $m$ divides $n$, then $u_q(m)$ divides $u_q(n)$. 

\begin{theorem}[{cf.\ \cite[Theorem 2.3]{Wu-Yue-Fan:2020-1}}]\label{thm:SR} Let $p$ and $d$ are distinct primes. Suppose that $d$ is odd. Then the following are equivalent. 

  \begin{enumerate}
    \item\label{item:SR-1}  $\overline{\Phi}_{d}^p(t)$ has a self-reciprocal irreducible factor.
        \item\label{item:SR-2} There exists an integer $w$ such that $p^{w} \equiv -1 \pmod{d}$.
    \item\label{item:SR-3}  $p^{u} \not\equiv 1 \pmod{d}$ where $u=u_2(\varphi(d))$ and $\varphi(d)$ denotes Euler's totient function.
  \end{enumerate}
\end{theorem}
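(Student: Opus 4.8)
The plan is to establish the two equivalences $(1)\Leftrightarrow(2)$ and $(2)\Leftrightarrow(3)$ separately. The first is the conceptual core and rests on the standard description of how a cyclotomic polynomial factors over a finite field, together with a translation of self-reciprocity into a statement about Frobenius orbits; the second is a routine computation in the cyclic group $\Z_d^\times$ that reduces both conditions to the single assertion that the multiplicative order of $p$ modulo $d$ is even.

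For $(1)\Leftrightarrow(2)$, I would first recall that since $p\neq d$ the polynomial $\overline{\Phi}_d^p(t)$ is separable, with roots in $\overline{\mathbb{F}}_p$ given exactly by the primitive $d$th roots of unity. The irreducible factors over $\mathbb{F}_p$ are the orbits of the Frobenius map $\zeta\mapsto\zeta^p$, so a fixed irreducible factor has root set $\{\zeta^{p^i}\}_{i\ge 0}$ for some primitive $d$th root of unity $\zeta$; passing to exponents via $\zeta^a\mapsto a$, these orbits are precisely the cosets $a\langle p\rangle$ of $\langle p\rangle$ in $\Z_d^\times$. Because $d$ is odd, no primitive $d$th root of unity equals its own inverse, and since the constant term of an irreducible factor is nonzero its reciprocal $t^{\deg g}g(t^{-1})$ is again irreducible, with root set $\{\zeta^{-p^i}\}$. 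Hence an irreducible factor $g$ is self-reciprocal exactly when its orbit is closed under inversion, i.e.\ when $\zeta^{-1}=\zeta^{p^w}$, i.e.\ when $p^w\equiv -1\pmod d$ for some $w$. The crucial observation is that this congruence is independent of the chosen orbit, so either every irreducible factor is self-reciprocal or none is, and both happen precisely when $(2)$ holds.

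For $(2)\Leftrightarrow(3)$, write $n=\operatorname{ord}_d(p)$ for the multiplicative order of $p$ modulo $d$, so that $\langle p\rangle$ is the cyclic subgroup of order $n$ in $\Z_d^\times$. Since $d$ is an odd prime, $\Z_d^\times$ is cyclic of even order $\varphi(d)=d-1$, and $-1$ is its unique element of order $2$. Thus $p^w\equiv -1\pmod d$ is solvable if and only if $\langle p\rangle$ contains that element, which occurs exactly when $n$ is even (in which case $p^{n/2}$ is forced to be $-1$); this gives $(2)\Leftrightarrow n\text{ even}$. On the other hand, writing $\varphi(d)=2^k u$ with $u=u_2(\varphi(d))$ odd, the divisibility $n\mid\varphi(d)=2^k u$ shows that $n\mid u$ holds if and only if $n$ is odd, and $n\mid u$ is exactly the statement $p^u\equiv 1\pmod d$. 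Therefore $(3)$, namely $p^u\not\equiv 1\pmod d$, is equivalent to $n$ being even, closing the chain.

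The main obstacle I anticipate lies entirely in $(1)\Leftrightarrow(2)$: one must set up the dictionary between irreducible factors of $\overline{\Phi}_d^p$ and Frobenius orbits carefully enough to justify the ``all-or-nothing'' phenomenon, and must verify that self-reciprocity of a single irreducible factor is faithfully detected by closure of its orbit under inversion, where the hypothesis that $d$ is odd is used to rule out self-inverse primitive roots. Once this translation is in place, the remaining implications are elementary, and the easily checked condition $(3)$ can subsequently be combined with Proposition~\ref{prpn:Lecuona-observation}\eqref{item:Lecuona-observation-3} to force a self-reciprocal irreducible factor of odd multiplicity and hence obstruct sliceness of $P_a$ when $a\equiv 3\pmod 4$.
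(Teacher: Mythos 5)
Your proof is correct, but it takes a genuinely more self-contained route than the paper's. The key difference is at $(1)\Leftrightarrow(2)$: the paper does not prove this equivalence at all, instead citing it directly from Wu--Yue--Fan \cite[Theorem 2.3]{Wu-Yue-Fan:2020-1}, whereas you supply the argument via the standard dictionary between irreducible factors of $\overline{\Phi}_d^p$ over $\mathbb{F}_p$ and Frobenius orbits on primitive $d$th roots of unity. Your sketch of that dictionary is sound: separability follows from $p\neq d$; on exponents the orbits are the cosets $a\langle p\rangle$ in $\Z_d^\times$; closure of any one orbit under inversion is equivalent, after dividing by the invertible $a$, to $-1\in\langle p\rangle$, which gives exactly the all-or-nothing phenomenon you highlight; and oddness of $d$ rules out self-inverse primitive roots, so a factor whose root set is inversion-closed has even degree and constant term $1$ and is therefore self-reciprocal in the strict sense $g(t)=t^{\deg g}g(t^{-1})$, not merely up to a unit. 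At $(2)\Leftrightarrow(3)$ your argument also differs in structure from the paper's: you reduce both conditions to the single statement that $r=\operatorname{ord}_d(p)$ is even, using that $-1$ is the unique element of order $2$ in the cyclic group $\Z_d^\times$, while the paper proves the two implications separately --- $(2)\Rightarrow(3)$ by a B\'ezout trick with $s=\gcd(2w,u)$ (so that $p^s\equiv 1$, $s$ odd, $s\mid w$, contradicting $p^w\equiv-1$), and $(3)\Rightarrow(2)$ by essentially your parity observation, namely that $r$ must be even and then $p^{r/2}\equiv-1$ because $\Z_d$ is a field. What your version buys is a uniform, citation-free proof of the whole theorem and a cleaner conceptual picture (everything is the parity of $r$); what the paper's version buys is brevity, and its B\'ezout step does not even need cyclicity of $\Z_d^\times$ in that direction.
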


\begin{proof}
As we mentioned, the equivalence between \eqref{item:SR-1} and \eqref{item:SR-2} is proved by Wu, Yue and Fan in \cite[Theorem 2.3]{Wu-Yue-Fan:2020-1}.

We first show that \eqref{item:SR-2} implies \eqref{item:SR-3}. Let $w$ be an integer such that $p^{w} \equiv -1 \pmod{d}$ and suppose that $p^{u} \equiv 1 \pmod{d}$ to get a contradiction. Let $s=\gcd(2w,u)$ so that there are integers $x, y \in \Z$ such that $s=2wx + uy$.  Observe that $p^{s} =p^{2wx+uy}\equiv 1 \pmod{d}$.
  Since $s$ is odd and $s$ divides $2w$, $s$ divides $w$. It follows that $p^{w} \equiv 1 \pmod{d}$ which is a contradiction. 

It remains to show that \eqref{item:SR-3} implies \eqref{item:SR-2}. Let $r = \mathrm{ord}(p,d)$ be the order of $p$ in $\Z_{d}^{\times}$. By Euler's theorem, $r$ divides $\varphi(d)$. If $r$ is odd, then $r$ divides $u=u_2(\varphi(d))$. This implies that  $p^{u} \equiv 1 \pmod{d}$ which is a contradiction. We can conclude that $r$ is even. Observe that $p^{r/2} \not\equiv 1 \pmod{d}$ and $(p^{r/2})^{2} \equiv 1 \pmod{d}$. Since $d$ is prime, $\Z_{d}$ is a field and hence $p^{r/2} \equiv -1 \pmod{d}$.
\end{proof}

\begin{theorem}\label{thm:main2}
For an odd integer $a\geq 3$, let $P_a$ be the $3$-stranded pretzel knot $P(a,-a-2,-\frac{(a+1)^2}{2})$. If $a\equiv 3 \pmod{4}$, then the Alexander polynomial $\Delta_{P_a}(t)$ does not have a Fox-Milnor factorization and hence $P_a$ is not topologically slice.
\end{theorem}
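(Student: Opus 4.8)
The plan is to use Proposition~\ref{prpn:Lecuona-observation}\eqref{item:Lecuona-observation-3} contrapositively: assuming $P_a$ were slice, I will exhibit a prime $p$ dividing $\frac{a+1}{2}$ and an odd prime $d$ dividing $a+2$ for which $\overline{\Phi}_d^p(-t)$ \emph{does} have a self-reciprocal irreducible factor, contradicting that proposition. Since $\overline{\Phi}_d^p(t)$ has a self-reciprocal irreducible factor if and only if $\overline{\Phi}_d^p(-t)$ does, Theorem~\ref{thm:SR} reduces the task to finding $p,d$ with $p^{w}\equiv -1\pmod{d}$ for some integer $w$, that is, with the order of $p$ in $\Z_d^\times$ even.

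Set $M=\frac{a+1}{2}$, so that $a+1=2M$. The hypothesis enters here: $a\equiv 3\pmod{4}$ forces $4\mid a+1$, hence $2\mid M$, so $M$ and $2M$ share the same set of prime divisors. I would take $d$ to be any odd prime divisor of $a+2$ (one exists since $a+2\ge 5$ is odd). From $d\mid a+2$ we get $a\equiv -2\pmod{d}$, whence $2M=a+1\equiv -1\pmod{d}$; thus the image of $2M$ in $\Z_d^\times$ has order exactly $2$.

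Now I would run the following subgroup argument. The elements of odd order in the cyclic group $\Z_d^\times$ form a subgroup $H$. Suppose every prime divisor $p$ of $M$ had odd order modulo $d$, i.e.\ lay in $H$. Since $2\mid M$, every prime occurring in the factorization of $2M$ also divides $M$, so $2M$ is a product of elements of $H$ and therefore $2M\in H$ has odd order---contradicting that $2M\equiv -1\pmod{d}$ has order $2$. Hence some prime $p\mid M$ has even order in $\Z_d^\times$; as $d$ is an odd prime, the unique element of order $2$ is $-1$, so a suitable power satisfies $p^{w}\equiv -1\pmod{d}$ (and $p\ne d$ automatically, since $\gcd(a+1,a+2)=1$). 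This is condition~\eqref{item:SR-2} of Theorem~\ref{thm:SR}, which produces the desired self-reciprocal irreducible factor and yields the contradiction; equivalently, $\Delta_{P_a}(t)$ admits no Fox--Milnor factorization.

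The crux---and the one new idea---is to test a prime divisor of $a+2$ rather than of $a$. A divisor of $a$ would give $2M\equiv +1\pmod{d}$, an element of trivial order, yielding nothing; only $a+2$ produces the order-$2$ element $2M\equiv -1$. The congruence $a\equiv 3\pmod{4}$ then plays the decisive role of guaranteeing $2\mid M$, so that the stray factor of $2$ in $2M$ is already among the prime divisors of $M$ and cannot carry $2M$ out of the odd-order subgroup $H$ on its own; without this the argument breaks, consistent with the method giving no information when $a\equiv 1\pmod{4}$. Beyond this structural observation and the elementary fact that the odd-order elements of an abelian group form a subgroup, I expect no delicate computation to be required.
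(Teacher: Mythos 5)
Your proposal is correct, but it takes a genuinely different and more elementary route than the paper's proof. The paper sets $u=u_2(\varphi(a(a+2)))$ and splits into two cases according to whether $\bigl(\tfrac{a+1}{2}\bigr)^u\equiv 1\pmod{a(a+2)}$: in the first case it deduces $2^u\equiv -1\pmod d$ for every prime $d\mid a+2$ (condition~\eqref{item:SR-2} of Theorem~\ref{thm:SR} with $p=2$), while in the second it invokes the Chinese remainder theorem to find a prime $d\mid a(a+2)$ with $p^u\not\equiv 1\pmod{d^l}$ and then needs Proposition~\ref{prpn:PrimePower} to descend from modulus $d^l$ to modulus $d$ before applying the implication \eqref{item:SR-3}$\Rightarrow$\eqref{item:SR-1}. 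You bypass all of this with a single direct argument: fix any odd prime $d\mid a+2$, note $2M=a+1\equiv -1\pmod d$ for $M=\tfrac{a+1}{2}$, and use $a\equiv 3\pmod 4$ to get $2\mid M$, so the primes of $2M$ are exactly the primes of $M$; if all of these lay in the subgroup $H\le \Z_d^\times$ of odd-order elements, then $2M\in H$ would have odd order, contradicting that $-1$ has order $2$, so some prime $p\mid M$ has even order mod $d$, and since $\Z_d^\times$ is cyclic its unique order-$2$ element is $-1$, giving $p^w\equiv -1\pmod d$, which is condition~\eqref{item:SR-2} verbatim. The side conditions all check: $p\neq d$ since $\gcd(a+1,a+2)=1$, $d$ is odd since $a+2$ is odd, and Proposition~\ref{prpn:Lecuona-observation}\eqref{item:Lecuona-observation-3} (whose proof rests on the multiplicity-one structure of \eqref{eqn:Alexander}) converts your self-reciprocal irreducible factor into the failure of the Fox--Milnor factorization of $\Delta_{P_a}(t)$ itself, exactly as the theorem asserts. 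What your route buys: it uses only the implication \eqref{item:SR-2}$\Rightarrow$\eqref{item:SR-1} of Theorem~\ref{thm:SR}, never touches condition~\eqref{item:SR-3}, the Chinese remainder theorem, or Proposition~\ref{prpn:PrimePower}, always lands on a divisor of $a+2$ (the paper's second case may only produce $d\mid a$), and it isolates cleanly where the hypothesis $a\equiv 3\pmod 4$ enters, consistent with the method's silence when $a\equiv 1\pmod 4$; the paper's argument, in exchange, exercises its criterion~\eqref{item:SR-3}, which is the form of Theorem~\ref{thm:SR} it develops for independent interest.
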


\begin{proof}
Let $a\geq 3$ be an odd integer such that $a\equiv 3\pmod{4}$. By Proposition~\ref{prpn:Lecuona-observation}\eqref{item:Lecuona-observation-3}, it suffices to find a prime divisor $p$ of $\frac{a+1}{2}$ and a divisor $d$ of $a$ or $a+2$ such that $\overline{\Phi}_{d}^p(-t)$ has a self-reciprocal irreducible factor. For this purpose, we will use Theorem \ref{thm:SR}.  Let $u = u_{2}(\varphi(a(a+2)))$. Note that $u$ is an odd integer.

  If $(\frac{a+1}{2})^{u} \equiv 1 \pmod{a(a+2)}$, then $2^{u} \equiv (a+1)^{u} \pmod{a(a+2)}$. Since $(a+1)^{2} \equiv 1 \pmod{a(a+2)}$ and $u$ is odd, 
  \[2^{u} \equiv (a+1)^u\equiv (a+1) \pmod{a(a+2)}.\]
   It follows that $2^{u} \equiv -1 \pmod{d}$ for any prime factor $d$ of $a+2$. Note that $2$ is a prime factor of $\frac{a+1}{2}$ since $a \equiv 3 \pmod{4}$. By Theorem \ref{thm:SR}, $\overline{\Phi}_d^2(-t)$ has a self-reciprocal irreducible factor for any prime divisor $d$ of $a+2$.

  If $(\frac{a+1}{2})^{u} \not\equiv 1 \pmod{a(a+2)}$, choose a prime factor $p$ of $\frac{a+1}{2}$ such that $p^u\not\equiv 1 \pmod{a(a+2)}$. By Chinese remainder theorem, there is a prime factor $d$ of $a(a+2)$ such that $p^{u} \not\equiv 1 \pmod{d^{l}}$ where $l=v_{d}(a(a+2))$. Since $u_{2}(\varphi(d^{l}))=d^{l-1}u_{2}(\varphi(d))$ is a divisor of $u$, $( p^{u_{2}(\varphi(d))} )^{d^{l-1}} \not\equiv 1 \pmod{d^{l}} $. By Proposition~\ref{prpn:PrimePower} given below, $p^{u_{2}(\varphi(d))} \not\equiv 1 \pmod{d}$. By Theorem \ref{thm:SR},  $\overline{\Phi}_d^p(-t)$ has a self-reciprocal irreducible factor. 
\end{proof}

It remains to prove the following elementary fact which was used in the above proof.
\begin{proposition}\label{prpn:PrimePower}
For a prime $d$, if $n \equiv 1 \pmod{d}$, then $n^{d^{l-1}} \equiv 1 \pmod{d^{l}}$.
\end{proposition}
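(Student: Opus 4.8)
The plan is to prove the statement $n \equiv 1 \pmod{d} \implies n^{d^{l-1}} \equiv 1 \pmod{d^l}$ by induction on $l$, lifting the congruence one power of $d$ at a time. The base case $l=1$ is immediate since $n \equiv 1 \pmod{d}$ is exactly the hypothesis. For the inductive step, I would assume $n^{d^{l-1}} \equiv 1 \pmod{d^l}$ and aim to deduce $n^{d^l} \equiv 1 \pmod{d^{l+1}}$.

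The key computational idea is to write $n^{d^{l-1}} = 1 + k d^l$ for some integer $k$, using the inductive hypothesis, and then raise both sides to the $d$th power:
\[
n^{d^l} = (n^{d^{l-1}})^d = (1 + k d^l)^d = \sum_{i=0}^{d} \binom{d}{i} (k d^l)^i.
\]
The first two terms of this binomial expansion are $1 + d \cdot k d^l = 1 + k d^{l+1}$, which is already $\equiv 1 \pmod{d^{l+1}}$. So the heart of the argument is to check that every remaining term with $i \geq 2$ is divisible by $d^{l+1}$. For such a term, the factor $(k d^l)^i$ contributes $d^{li} = d^{l \cdot i}$, and since $i \geq 2$ and $l \geq 1$ we have $li \geq l + 1$ (indeed $li - (l+1) = l(i-1) - 1 \geq l \cdot 1 - 1 = l - 1 \geq 0$). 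Hence each of these terms is divisible by $d^{l+1}$, and the whole sum reduces to $1 \pmod{d^{l+1}}$.

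I expect the main (and only) subtlety to be this divisibility bookkeeping for the higher binomial terms, which is completely routine; notably one does not even need the prime $p=d$ in the binomial coefficients themselves, since the power of $d$ coming from $(d^l)^i$ alone suffices once $i \geq 2$. An alternative, slightly slicker route would be to invoke the standard lifting-the-exponent phenomenon directly, but the explicit binomial expansion is self-contained and avoids citing outside machinery, so I would write it out this way. The induction then closes immediately, completing the proof.
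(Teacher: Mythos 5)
Your proof is correct, and it takes a genuinely different route from the paper's. The paper argues in one shot, without induction: writing $n = md+1$, it expands $(md+1)^{d^{l-1}}$ fully by the binomial theorem and shows that every term $m^k d^k \binom{d^{l-1}}{k}$ with $k \geq 1$ is divisible by $d^l$. That requires real bookkeeping on the binomial coefficients themselves: from the identity $k\binom{d^{l-1}}{k} = d^{l-1}\binom{d^{l-1}-1}{k-1}$ one extracts that $d^{l-1}$ divides $d^{v_d(k)}\binom{d^{l-1}}{k}$, and then the inequality $v_d(k) < k$ upgrades $d^k\binom{d^{l-1}}{k}$ to a multiple of $d^l$. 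Your induction on $l$ replaces that single large expansion by the much smaller one $(1+kd^l)^d$, in which the divisibility is trivial: the $i=1$ term is exactly $kd^{l+1}$, and for $i \geq 2$ the factor $d^{li}$ alone suffices because $li \geq l+1$. Consequently you never use any divisibility property of binomial coefficients beyond $\binom{d}{1}=d$, and---as you correctly note---primality of $d$ is never invoked, so your argument actually proves the statement for an arbitrary modulus $d \geq 2$, whereas the paper's write-up is phrased via the $d$-adic valuation $v_d$, which it defines only for $d$ prime (though its computation would also adapt). What the paper's version buys is a direct, non-inductive computation contained in a single display; what yours buys is lighter valuation bookkeeping, a self-contained inductive step in the style of lifting the exponent, and slightly greater generality. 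Both proofs are complete.
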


\begin{proof}
  Write $n=md+1$ for some integer $m$. Then 
  \begin{center}
\(
    n^{d^{l-1}}=(md+1)^{d^{l-1}}  =1+ \sum_{k=1}^{d^{l-1}} m^{k}d^{k}\binom{d^{l-1}}{k}.
 \)
 \end{center}
 Let $k$ be an integer with $1\leq k\leq d^{l-1}$. Then $d^{l-1}$ divides $d^{v_d(k)}\binom{d^{l-1}}{k}$ since $k\binom{d^{l-1}}{k} = d^{l-1}\binom{d^{l-1}-1}{k-1}$. Since $v_d(k)<k$,   $d^{l}$ divies $d^{k}\binom{d^{l-1}}{k}$ for all such an integer $k$. Thus 
  $n^{d^{l-1}} \equiv 1 \pmod{d^{l}}$.
\end{proof}

\section{Discussion on the remaining cases}
\label{sec:examples}The remaining cases of Conjecture~\ref{conjecture:Lecuona} are when $a\equiv 1, 97 \pmod{120}$. In either case, there are examples satisfying the conditions \eqref{item:Lecuona-observation-2}, \eqref{item:Lecuona-observation-3} and \eqref{item:Lecuona-observation-4} of Proposition~\ref{prpn:Lecuona-observation}:
 \[a= 1081, 3577, 11257, 12457, 12841, 14617, 17521, 17881, \ldots.\]
 Hence the methods of Sections~\ref{section:method1} and \ref{sec:SR} do not work for these knots.

\bibliographystyle{amsalpha}
\renewcommand{\MR}[1]{}
\bibliography{research}

\end{document}